\newtheorem{theorem}{Theorem}
\newtheorem{corollary}{Corollary}
\newtheorem{definition}{Definition}
\newtheorem{lemma}{Lemma}
\newtheorem{rem}{Remark}
\begin{document}

\title{The index of Toeplitz operators on compact Lie groups and  simply connected closed 3-manifolds.}

\author{Duv\'an Cardona}
\affil{Pontificia Universidad Javeriana, Mathematics Department, Bogot\'a-Colombia}
\affil[1]{cardonaduvan@javeriana.edu.co}

\abstract{In this paper we use the notion of  operator-valued symbol in order to compute the index of Toeplitz operators on compact Lie groups. Our approach combines the Connes index theorem and the infinite-dimensional operator-valued symbolic calculus of Ruzhansky-Turunen.  We also give applications to the index of Toeplitz operators on simply connected closed $3$-manifolds  $\mathbb{M}\simeq \mathbb{S}^3\simeq \textnormal{SU}(2) ,$ by using, as a fundamental tool,  the   Poincar\'e theorem (see Perelman \cite{Perelman1,Perelman2,Perelman3,Perelman4}).  MSC 2010: Primary 58J20; Secondary  58J22, 43A77, 57M27.}

\maketitle

\tableofcontents

\section{Introduction}
From the interplay of the Connes index theorem for Fredholm modules and the  operator-valued symbolic calculus  of Ruzhansky and Turunen, in this paper we compute the index of Toeplitz operators acting on functions in compact Lie groups. Although, the point of departure of the index theory is the Atiyah-Singer index theorem, proved in 1963 in \cite{AS} (see also, the historical references \cite{atiyabott1,atiyabott2,AS1,AS2,AS3,AS44,AS4,AS5}) the analysis for  the index of Toeplitz operators started with the classical formula of Noether-Gohberg-Krein (see \cite{NF}) 
\begin{equation}\label{ToeplitzNGK}
\textnormal{ind}(PM_{f}P)=-\textnormal{wn}(f):=-\frac{1}{2\pi i}\int_{\mathbb{S}^1}f^{-1}{df}.
\end{equation}
In the index formula \eqref{ToeplitzNGK}, the function $f\in C^\infty(G)$ is invertible everywhere, $M_f$ is the multiplication operator by $f,$ and $P$ is the projection from $L^2(\mathbb{S}^1)$ into the Hardy space $H^1(\mathbb{S}^1),$ consisting of those functions in $L^2$ with negative Fourier coefficients vanishing. The main feature in \eqref{ToeplitzNGK} is that the left hand side is of analytical nature, but the right hand side has topological information given by minus the winding number of $f$ around of zero. This result was extended by V. Venugopalkrishna \cite{Venugopalkrishna} to the unit ball in $\mathbb{C}^n$ and by L. Boutet de Monvel to arbitrary strictly pseudoconvex domains \cite{LvM}. A similar formula for boundaries of strictly pseudo-convex domains
in $\mathbb{C}^n$ was announced by Dynin \cite{Dy}. For the spectral properties of Toeplitz operators and its index theory in several complex variables, we refer the reader to the references  Douglas \cite{Dou}, Guillemin \cite{Gui}, Boutet de Monvel and Guillemin \cite{LvM2}, Boutet de Monvel \cite{LvM3,LvM4}, Murphy \cite{Murphy1,Murphy2,Murphy3} as well as the monograph Upmeier\cite{Up}. The index theory in the operator-valued context can be found in Cardona \cite{CardonaIndex}. We refer the reader to Hong \cite{Hong} for a Lie-algebraic approach to the local index theorem on compact Lie groups and general compact homogeneous spaces.

In this paper we want to compute the index of Toeplitz operators on compact Lie groups by using the recent notion of operator valued symbol  \cite{Ruz,Ruz-T} in the sense of Ruzhansky and Turunen. Our main theorem can be announced as follows. Here $G$ is a compact Lie group, $\widehat{G}$ is its unitary dual, $e_G$ is the identity element of $G,$ and for every irreducible representation $[\xi]\in \widehat{G},$ $d_\xi:=\dim[\xi:G\rightarrow \textnormal{hom}(\mathbb{C}^{d_\xi})]$ denotes  the dimension of the representation space. 

\begin{theorem}[Index of Toeplitz operators]\label{maintheorem}
Let us consider a smooth   and invertible function $f$ on $G.$  Let  $\Pi:L^2(G)\rightarrow \Pi'$ be the orthogonal projection of a closed subspace $\Pi'\subset L^2(G)$. If  $T_f=\Pi M_{f} \Pi$ is the Toeplitz operator  with symbol $f,$ then $T_f$ extends to a  Fredhlom operator on $L^2(G)$ and its analytical index is given by
\begin{align}
\textnormal{ind}(T_f)=\int\limits_{G}\sum_{[\xi]\in\hat{G}}d_{\xi}\textnormal{Tr}[\sigma_{I_{\Pi,f}}(x)\xi(e_G)     ]dx,\,\,I_{\Pi,f}:=f^{-1}\underbrace{[{\Pi},f][{\Pi},{{f^{-1}}}]\cdots [{\Pi},f]}_{n+2-times} 
\end{align} if $n$ is odd,  or
\begin{align}
\textnormal{ind}(T_f)=\int\limits_{G}\sum_{[\xi]\in\hat{G}}d_{\xi}\textnormal{Tr}[\sigma_{ I_{\Pi,f} }(x)\xi(e_G)    ]dx,\,\,I_{\Pi,f}:=f^{-1}\underbrace{[{\Pi},f][{\Pi},f^{-1}]\cdots [{\Pi},f]}_{n+1-times},
\end{align}  if $n$ is even, where
\begin{itemize}
\item $\sigma_{I_{\Pi,f}}$ is the Ruzhansky-Turunen operator valued symbol associated to $I_{\Pi,f},$ and
\item $[A,B]$ is the commutator operator defined by the operators $A$ and $B.$
\end{itemize}
\end{theorem}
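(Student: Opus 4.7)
The plan is to manufacture a Fredholm module from the projection $\Pi$, apply Connes' index formula to express $\textnormal{ind}(T_f)$ as an operator trace of a product of commutators, and then translate that trace into the language of Ruzhansky--Turunen operator-valued symbols.

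First, I would set $F := 2\Pi - I$ on $H := L^{2}(G)$, so that $F = F^{*}$ and $F^{2} = I$. Together with the $*$-representation of $C^{\infty}(G)$ on $H$ by multiplication operators $f \mapsto M_{f}$, the pair $(H, F)$ becomes an odd (ungraded) Fredholm module over $C^{\infty}(G)$. The key analytic input is that the commutators $[F, M_{f}] = 2[\Pi, f]$ lie in the Schatten ideal $\mathcal{L}^{p}$ for $p$ controlled by the dimension $n = \dim G$; the precise requirement is $(n+1)$-summability when $n$ is even and $(n+2)$-summability when $n$ is odd, which matches the number of commutator factors in the statement.

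Next, I would invoke Connes' index theorem for $p$-summable Fredholm modules. When $f \in C^{\infty}(G)$ is invertible, the operator $\Pi M_{f}\Pi: \Pi' \to \Pi'$ is Fredholm and its index equals (up to a universal normalising constant) the operator trace
$$
\textnormal{Tr}\bigl( f^{-1}[\Pi, f][\Pi, f^{-1}] \cdots [\Pi, f]\bigr),
$$
with $n+2$ or $n+1$ commutator factors according to the parity of $n$. This produces precisely the operator $I_{\Pi, f}$ in the theorem, and the summability hypothesis guarantees that $I_{\Pi, f}$ is trace-class so the right-hand side makes sense.

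To finish, I would write the trace $\textnormal{Tr}(I_{\Pi, f})$ intrinsically on $G$ via the Ruzhansky--Turunen global quantization. Recall that for a trace-class operator $A$ on $L^{2}(G)$ with matrix-valued symbol $\sigma_{A}(x, \xi)$, the Fourier inversion and Plancherel formula on $G$ give
$$
\textnormal{Tr}(A) = \int_{G} \sum_{[\xi] \in \widehat{G}} d_{\xi}\, \textnormal{Tr}\bigl[ \sigma_{A}(x)\, \xi(e_{G}) \bigr]\, dx,
$$
because $\xi(e_{G}) = I_{d_\xi}$ collapses the inner trace to the diagonal of $\sigma_{A}$ at $x$. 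Applying this identity to $A = I_{\Pi, f}$ yields the announced formula in each parity.

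The main obstacle is the summability step: to compute $[\Pi, f]$ in a Schatten class I would analyse $\Pi$ as a Fourier-multiplier-type operator on $G$ and use the Ruzhansky--Turunen symbolic calculus to extract the order of the commutator, then apply standard embedding theorems between Sobolev-type and Schatten classes on $G$. The rest (Connes' formula and the global trace identity) is then a matter of assembling existing machinery.
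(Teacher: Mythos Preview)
Your proposal is correct and follows essentially the same route as the paper: set $F=2\Pi-I$ to obtain a $p$-summable odd Fredholm module over $C^\infty(G)$ (the paper records this as a lemma, arguing that $[\Pi,M_f]$ is a pseudo-differential operator of order $-1$ and hence $p$-summable for $p>n$), apply Connes' index theorem so that the powers of $2$ from $[F,\cdot]=2[\Pi,\cdot]$ cancel the normalising constant and leave $\textnormal{ind}(T_f)=-\textnormal{Tr}(I_{\Pi,f})$, and finally rewrite this trace via the global trace formula $\textnormal{Tr}(A)=\int_G\sum_{[\xi]}d_\xi\textnormal{Tr}[\sigma_A(x,\xi)]\,dx$ together with the identity $\sigma_A(x,\xi)=\sigma_A(x)\xi(e_G)$. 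Your identification of the summability step as the only genuine analytic input matches the paper's emphasis exactly.
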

As a consequence of the previous theorem, we will prove that the index of Toeplitz operators can be written in terms of the algebraic information encoded in the unitary dual of a compact Lie group, and the operator-valued Fourier analysis that allow us to define the Ruzhansky-Turunen operator valued symbol for continuous operators on $C^\infty(G).$ 

It is important to mention that from Theorem \ref{maintheorem}, we can derive index formulae for Toeplitz operators on  3-dimensional closed manifolds. In fact, if   $\mathbb{M}$ is a  simply connected closed 3-manifold, the Poincar\'e conjecture/theorem proved by Perelman provides a diffeomorphism  $ \mathbb{M}\simeq \mathbb{S}^3\simeq \textnormal{SU}(2),$ (see Ruzhansky and Turunen \cite{Ruz}, pag. 578, and Perelman \cite{Perelman1,Perelman2,Perelman3,Perelman4}) that allow us to construct a global (operator-valued) pseudo-differential calculus on $\mathbb{M}.$ This  implies that the analysis employed for Toeplitz operators on $\textnormal{SU}(2)$ gives a similar construction for Toeplitz operators on $\mathbb{M}.$ So, if  $T_\omega=\Pi M_{\omega} \Pi$ is a Toeplitz operator  with symbol $\omega,$ such that $\omega^{-1}\in C^\infty(\mathbb{M}),$  we will prove  the following algebraic index formula (see Corollary \ref{maintheoremDC2}),
\begin{align}
\textnormal{ind}(T_\omega)=\int\limits_{\mathbb{M}}\sum_{\ell \in \frac{1}{2}\mathbb{N}_0}(2\ell+1)\textnormal{Tr}[\sigma_{ \omega^{-1} [\Pi,\omega][\Pi,\omega^{-1}][\Pi,\omega][\Pi,\omega^{-1}][\Pi,\omega]} (x)   \xi_\ell(e_\mathbb{M}))     ]dx, 
\end{align} where $e_\mathbb{M}=\Phi(e_\textnormal{SU(2)}).$

This paper is organized as follows. In Section \ref{Preliminaries} we present some basics on the matrix valued and operator valued quantizations procedure for (global) pseudo-differential operators on compact Lie groups. In Section \ref{indexsection} we study the index of Toeplitz operators on compact Lie groups.  Finally, in section \ref{indexsectioncompact} we investigate the index of Toeplitz operators on simply connected closed 3-manifolds.

\section{Global operators on compact Lie groups. Preliminaries}\label{Preliminaries}

In this section we consider the pseudo-differential calculus of Ruzhansky and Turunen on compact Lie groups. Here,  a Lie group is a group $G$ that at the same time is a finitedimensional
manifold of differentiability class $C^2$, in such a way that the two group operations of $G$, $x\mapsto x^{-1},$ and $(x,y)\mapsto x\cdot y$ are $C^2$-mappings, (see Duistermaat and Kolk \cite{Liegroups}).  Theorem 1.6.1 of \cite{Liegroups} shows that every
$C^2$ Lie group $G,$ in the sense of the previous remark, can be provided with the
structure of a real-analytic manifold for which it becomes a real-analytic Lie

\subsection{Operator-valued quantization of global operators on compact Lie groups}
In this subsection we present the Ruzhansky-Turunen operator valued quantization procedure for global operators on compact Lie groups. Throughout of this paper $G$ is a compact Lie group endowed with its normalised Haar measure $dg$. Our main tool is the Fourier analysis carried by a global Fourier transform. It can be defined as follows.
\begin{definition}[Operator-valued Fourier transform]
For $f\in \mathscr{D}'(G),$ the respective right-convolution operator $r(f):C^\infty(G)\rightarrow C^\infty(G)$ is defined by
\begin{eqnarray}
r(f)g=g\ast f,\,\,\,g\in C^{\infty}(G).
\end{eqnarray}
If $f\in L^2(G),$ the (right) global Fourier transform is defined by \begin{equation}
\widehat{f}\equiv r(f)=\int_{G}f(y)\pi_R(y)^*dy,
\end{equation}
where $\pi_R$ is the right regular representation on $G,$ defined by $\pi_R(x)f(y)=f(yx)$ and $\pi_{R}(x)^*=\pi_{R}(x^{-1}),$  $x\in G.$
\end{definition}

In terms of the Fourier transform, the Fourier inversion formula can be announced as follows.

\begin{theorem}[Fourier inversion formula]
Let us assume that $f\in C^{\infty}(G).$ Then $r(f)\pi_R(x)$ is a trace class operator on $L^2(G),$ and the indentity
\begin{equation}
f(x)=\textnormal{Tr}(r(f)\pi_R(x)),\,\,f\in C^{\infty}(G),
\end{equation}  holds true for every $x\in G.$
\end{theorem}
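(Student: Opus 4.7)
The plan is to realise $r(f)\pi_R(x)$ as an integral operator with a smooth Schwartz kernel and then to invoke the standard fact that every smoothing operator on a compact manifold is of trace class, with trace equal to the integral of its kernel along the diagonal. The stated identity will then fall out of a single application of the group law.

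First I would unfold the definitions. With the (right-invariant) convolution $(g\ast f)(y)=\int_G g(u)\,f(u^{-1}y)\,du$ and $\pi_R(x)g(u)=g(ux)$, a direct computation followed by the substitution $w=ux$ and right-invariance of the Haar measure gives, for $g\in C^\infty(G)$,
\begin{equation*}
[r(f)\pi_R(x)g](y)=\int_G g(ux)\,f(u^{-1}y)\,du=\int_G g(w)\,f(xw^{-1}y)\,dw.
\end{equation*}
Hence $r(f)\pi_R(x)$ is the integral operator on $L^2(G)$ with Schwartz kernel $K_{f,x}(y,w)=f(xw^{-1}y)$, which is jointly smooth on $G\times G$ because $f\in C^\infty(G)$ and the group operations of $G$ are smooth.

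Second, since $G$ is a compact smooth manifold, any operator on $L^2(G)$ with a $C^\infty$ Schwartz kernel is of trace class and its trace is obtained by integrating the kernel over the diagonal. Applied here this yields
\begin{equation*}
\textnormal{Tr}\bigl(r(f)\pi_R(x)\bigr)=\int_G K_{f,x}(y,y)\,dy=\int_G f(xy^{-1}y)\,dy=\int_G f(x)\,dy=f(x),
\end{equation*}
where the final equality uses the normalisation $\int_G dy=1$ of the Haar measure on $G$.

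The only point that requires care is keeping the convolution and regular-representation conventions consistent, so that the product $xw^{-1}y$ appears in the kernel and collapses to $x$ on the diagonal $y=w$; the trace-class property and the diagonal trace formula for smoothing operators on compact manifolds are entirely standard, so no substantive obstacle is expected.
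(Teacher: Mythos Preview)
Your argument is correct: the kernel computation $K_{f,x}(y,w)=f(xw^{-1}y)$ is right (and is consistent with the paper's convention $r(f)=\int_G f(y)\pi_R(y)^*\,dy$), and the collapse on the diagonal together with the normalised Haar measure gives exactly $f(x)$. The appeal to the standard fact that a smooth-kernel operator on a compact manifold is trace class with trace equal to the diagonal integral is legitimate.

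There is nothing to compare against in the paper itself: this Fourier inversion formula is stated there as a background result from the Ruzhansky--Turunen calculus and no proof is supplied. Your Schwartz-kernel argument is precisely the standard justification one would give for it, so it aligns with the intended use of the theorem in the paper.
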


\begin{definition}[Ruzhansky-Turunen operator valued quantization]
If $\rho:G\rightarrow \mathscr{B}(C^\infty(G))$ is a continuous operator, the pseudo-differential operator $A$ associated to $\rho,$ is defined by
\begin{equation}
Af(x)=\textnormal{Tr}(\rho(x)r(f)\pi_R(x)),\,\,f\in C^{\infty}(G).
\end{equation}

\end{definition}
Conversely we have the following theorem due to Ruzhansky and Turunen.

\begin{theorem}
 If $A:C^\infty(G)\rightarrow C^\infty(G)$ is a continuous linear operator, then there exists an unique $\sigma_{A}:G \rightarrow \mathscr{B}(C^\infty(G))$ (called the operator-valued symbol of $A$) satisfying
\begin{equation}\label{RTQo}
Af(x)=\textnormal{Tr}(\sigma_A(x)r(f)\pi_R(x)),\,\,f\in C^{\infty}(G).
\end{equation} 
\end{theorem}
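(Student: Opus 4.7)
The plan is to define the symbol explicitly using the Fourier inversion formula already stated, and then verify the quantization identity by interchanging $A$ with the trace. Given a continuous linear $A:C^\infty(G)\rightarrow C^\infty(G)$, I propose to set
\[
\sigma_A(x) := \bigl[A_y\,\pi_R(y)\bigr]\big|_{y=x}\,\pi_R(x)^{-1},
\]
where $A_y$ means that $A$ acts in the $y$-variable on the operator-valued map $y\mapsto \pi_R(y)$, interpreted entrywise: for each fixed $h\in C^\infty(G)$ and $z\in G$, the scalar function $y\mapsto [\pi_R(y)h](z)=h(zy)$ is smooth on $G$, so $A$ can be applied, and then evaluated at $y=x$. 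The definition makes sense because $\pi_R(x)$ is unitary and therefore invertible.

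The verification of \eqref{RTQo} is then a direct computation. Starting from the Fourier inversion formula $f(y)=\mathrm{Tr}(r(f)\pi_R(y))$ and using cyclicity of the trace together with the continuity of $A$ (which allows one to pull $A$ through the convergent Peter--Weyl trace series), I would write
\begin{align*}
\mathrm{Tr}(\sigma_A(x)\,r(f)\,\pi_R(x))
&=\mathrm{Tr}\bigl([A_y\pi_R(y)]|_{y=x}\,r(f)\bigr)\\
&=A_y\bigl[\mathrm{Tr}(\pi_R(y)\,r(f))\bigr]\big|_{y=x}\\
&=A_y[f(y)]\big|_{y=x}=Af(x),
\end{align*}
which gives existence.

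For uniqueness, assume a second symbol $\widetilde\sigma_A$ also satisfies \eqref{RTQo}. Fix $x\in G$ and set $\Delta(x):=\sigma_A(x)-\widetilde\sigma_A(x)$; then $\mathrm{Tr}(\Delta(x)\,r(f)\pi_R(x))=0$ for every $f\in C^\infty(G)$. Since $\pi_R(x)$ is invertible, this reduces to $\mathrm{Tr}(\Delta(x)\pi_R(x)\,r(f))=0$ for all $f$. By the Peter--Weyl theorem the Fourier components $\widehat{f}(\xi)=r(f)|_{H_\xi}$ can be prescribed independently across $[\xi]\in\widehat{G}$, so $\{r(f):f\in C^\infty(G)\}$ spans a sufficiently rich subspace of trace-class operators to make the pairing non-degenerate; this forces $\Delta(x)=0$, as required.

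The step I expect to be the main obstacle is the rigorous justification of the interchange $\mathrm{Tr}\circ A_y = A_y\circ \mathrm{Tr}$ used in the existence part. One must verify that the map $y\mapsto \pi_R(y)r(f)$ is smooth into a suitable ideal (trace-class for $f\in C^\infty(G)$, by rapid decay of the Fourier coefficients $\widehat{f}(\xi)$) and that $A$, viewed as acting componentwise on the absolutely convergent Peter--Weyl expansion of $f$, respects the trace. Both facts rest on the continuity of $A$ on the Fr\'echet space $C^\infty(G)$ together with the standard Peter--Weyl decomposition; the corresponding density statement needed for uniqueness follows from the same decomposition.
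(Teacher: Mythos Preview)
Your overall strategy---define the symbol as ``$A$ applied to the right regular representation, then normalised by $\pi_R(x)^{-1}$''---is sound and is precisely the operator-valued analogue of the matrix formula $\sigma_A(x,\xi)=\xi(x)^*(A\xi)(x)$. The paper takes a different but equivalent route: it defines $\sigma_A(x)$ as the right-convolution operator $r(R_A(x))$ built from the right-convolution kernel $R_A(x,\cdot)$ of the Schwartz kernel $K_A$, and then cites Ruzhansky--Turunen for the verification of the quantization identity. Your formula, once corrected, produces exactly this same operator, so the difference is only one of presentation: yours is closer to the matrix calculus, the paper's is closer to the kernel calculus.

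There is, however, a concrete error in your definition that breaks the first displayed equality in the verification. You place $\pi_R(x)^{-1}$ on the \emph{right}, but with the ordering $\mathrm{Tr}(\sigma_A(x)\,r(f)\,\pi_R(x))$ used in the statement this factor cannot be cancelled by cyclicity: one obtains
\[
\mathrm{Tr}\bigl([A_y\pi_R(y)]\big|_{y=x}\,\pi_R(x)^{-1}r(f)\pi_R(x)\bigr),
\]
and since $\pi_R(x)$ and $r(f)$ do \emph{not} commute on a non-abelian group (right translations do not commute with right convolutions), this is \emph{not} equal to $\mathrm{Tr}\bigl([A_y\pi_R(y)]\big|_{y=x}\,r(f)\bigr)$. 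A direct kernel computation shows your symbol gives $\int_G K_A(x,q)\,f(xqx^{-1})\,dq$ rather than $Af(x)$. The fix is simple: put $\pi_R(x)^{-1}$ on the \emph{left},
\[
\sigma_A(x):=\pi_R(x)^{-1}\,[A_y\pi_R(y)]\big|_{y=x},
\]
so that cyclicity moves $\pi_R(x)$ around to cancel it, and your computation then goes through exactly as written. With this correction the resulting symbol coincides with the paper's $r(R_A(x))$.

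One further caution on uniqueness: the family $\{r(f):f\in C^\infty(G)\}$ consists only of left-invariant (right-convolution) operators, so in the Peter--Weyl blocks they look like $\widehat{f}(\xi)\otimes I_{d_\xi}$. The pairing $T\mapsto \mathrm{Tr}(T\,r(f))$ is therefore non-degenerate only on operators that are themselves right-convolution operators, not on all of $\mathscr{B}(C^\infty(G))$. Your uniqueness argument is valid once one imposes that $\sigma_A(x)$ is left-invariant for each $x$---which is implicit in the paper's construction and in your corrected formula, since both produce right-convolution operators.
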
 \begin{proof}
The symbol $\sigma_A$ is defined as follows. Let $K_A\in C^\infty(G)\widehat\otimes\mathscr{D}'(G)$ be the distributional Schwartz kernel of $A$ and $R_{A}(x,y)=K(x,y^{-1}x)$ is the right-convolution kernel associated to $A.$ If  $x\in G,$ and $R_A(x)\in \mathscr{D}'(G)$ is defined by $(R_A(x))(y)=R_A(x,y)$ for every $y\in G,$ the (right) operator-valued symbol $\rho=\sigma_A$ associated to $A$ is defined by 
$
\sigma_A(x):=r(R_A(x)),\,\,\,x\in G.
$ It can be proved that this operator valued operator satisfies \eqref{RTQo} (see Ruzhansky and Turunen \cite{Ruz}, pag. 583)
\end{proof}

\subsection{Matrix-valued quantization of global operators on compact Lie groups} In this subsection we will present the matrix-valued quantization of global operators on compact Lie groups. There are two notions of continuous operators on smooth functions on compact Lie groups we can use. Namely, the one used in the case of general manifolds (based on the idea of {\em local symbols} as in  H\"ormander \cite{Hor2}) and, in a much more recent context, the one of global  operators on compact Lie groups as defined by M. Ruzhansky and V. Turunen \cite{RT1}\cite{RT2} (from {\em full symbols}, for which the notations and terminologies are taken from   \cite{Ruz-T}).  \\  
\\ Let us consider for every  compact Lie group $G$ its unitary dual $\widehat{G},$ that is the set of continuous, irreducible, and  unitary representations on $G.$  
As is the operator-valued quantization, the main tool in the matrix-valued quantization is a suitable notion of Fourier transform. We define it as follows.

\begin{definition}[Matrix-valued Fourier transform] Let us assume that $\varphi\in C^\infty(G).$ Then, the matrix-valued Fourier transform of $\varphi$ at $[\xi],$ is defined by
     $$  \mathscr{F}_G{\varphi}(\xi):=\int_{G}\varphi(x)\xi(x)^*dx.$$
\end{definition}
The Peter-Weyl theorem on compact Lie groups implies the following inversion formula.
\begin{theorem}[Fourier inversion formula]
Let us assume that $f\in L^1(G).$ Then, we have
$$\varphi(x)=\sum_{[\xi]\in \widehat{G}}d_{\xi}\text{Tr}(\xi(x)\mathscr{F}_G{\varphi}(\xi)), $$  for all $x\in G.$ In this case, the Plancherel identity on $L^2(G)$ is given by,
$$ \Vert \varphi \Vert^2_{L^2(G)}= \sum_{[\xi]\in \widehat{G}}d_{\xi}\text{Tr}(\widehat{\varphi}(\xi)\widehat{\varphi}(\xi)^*) =\Vert  \widehat{\varphi}\Vert^2_{ L^2(\widehat{G} ) } .$$
\end{theorem}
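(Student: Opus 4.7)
The plan is to deduce both identities from the Peter-Weyl theorem, which supplies an orthonormal basis of $L^{2}(G)$ out of the rescaled matrix coefficients $\{\sqrt{d_{\xi}}\,\xi_{ij} : [\xi]\in \widehat{G},\; 1\le i,j\le d_{\xi}\}$, together with the Schur orthogonality relations. The proof itself is little more than bookkeeping once that basis is in hand, so my task is to translate the abstract Parseval expansion into the matrix-valued language introduced in the definition above.

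First I would rewrite the entries of $\mathscr{F}_{G}\varphi(\xi)$ as $L^{2}$-inner products with the basis:
\[
[\mathscr{F}_{G}\varphi(\xi)]_{ji}=\int_{G}\varphi(y)\,\overline{\xi_{ij}(y)}\,dy=\langle \varphi,\xi_{ij}\rangle_{L^{2}(G)}.
\]
The Peter-Weyl expansion then reads
\[
\varphi(x)=\sum_{[\xi]\in\widehat{G}}d_{\xi}\sum_{i,j=1}^{d_{\xi}}\langle \varphi,\xi_{ij}\rangle\,\xi_{ij}(x),
\]
and I would repackage the inner double sum as a trace via
\[
\sum_{i,j}[\mathscr{F}_{G}\varphi(\xi)]_{ji}\,\xi_{ij}(x)=\textnormal{Tr}\bigl(\xi(x)\,\mathscr{F}_{G}\varphi(\xi)\bigr),
\]
which gives the inversion formula. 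For the Plancherel identity I would apply Parseval to the same orthonormal basis:
\[
\|\varphi\|_{L^{2}(G)}^{2}=\sum_{[\xi]}d_{\xi}\sum_{i,j}\bigl|\langle\varphi,\xi_{ij}\rangle\bigr|^{2}=\sum_{[\xi]}d_{\xi}\,\textnormal{Tr}\bigl(\widehat{\varphi}(\xi)\widehat{\varphi}(\xi)^{*}\bigr),
\]
where the second equality recognises the Hilbert-Schmidt norm of $\widehat{\varphi}(\xi)$ as the sum of squared moduli of its entries. The last quantity coincides with $\|\widehat{\varphi}\|_{L^{2}(\widehat{G})}^{2}$ under the weighted counting measure $d_{\xi}\,d[\xi]$ on $\widehat{G}$.

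The main conceptual obstacle is the Peter-Weyl theorem itself: orthogonality of matrix coefficients follows from Schur's lemma, while completeness relies on density of the representative subalgebra in $C(G)$ (a Stone-Weierstrass argument). Once those are granted, the only subtlety is regularity: for a general $\varphi\in L^{1}(G)$ the series need not converge pointwise, so the inversion formula at every $x\in G$ should be understood either for $\varphi\in C^{\infty}(G)$ (where the rapid decay of $\widehat{\varphi}(\xi)$ in $[\xi]$ ensures absolute convergence) or distributionally; the Plancherel identity, in turn, is most naturally stated for $\varphi\in L^{2}(G)$ by density from $C^{\infty}(G)$.
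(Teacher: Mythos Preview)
Your proposal is correct and aligns with the paper's approach: the paper does not supply a detailed proof of this theorem but simply records it as a consequence of the Peter--Weyl theorem, and your argument is precisely the standard unpacking of that implication via the orthonormal basis of rescaled matrix coefficients. Your closing remark about the regularity hypothesis (that pointwise convergence really requires $\varphi\in C^{\infty}(G)$ rather than $L^{1}(G)$, and Plancherel requires $L^{2}(G)$) is a fair observation about the statement as written.
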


\noindent Notice that, since $\Vert A \Vert_{\textnormal{HS}}=\sqrt{\text{Tr}(AA^*)}$, the term within the sum is the Hilbert-Schmidt norm of the matrix $\widehat{\varphi}(\xi)$. The matrix-valued quantization procedure of Ruzhansky-Turunen can be introduced as follows.  Any linear operator $A$ on $G$ mapping $C^{\infty}(G)$ into $\mathcal{D}'(G)$ gives rise to a {\em matrix-valued symbol} $\sigma_{A}(x,\xi)\in \mathbb{C}^{d_\xi \times d_\xi}$ given by
\begin{equation}
\sigma_A(x,\xi)\equiv \xi(x)^{*}(A\xi)(x):=\xi(x)^{*}[A\xi_{ij}(x)]_{i,j=1,\cdots,d_\xi},
\end{equation}
which can be understood from the distributional viewpoint. Then it can be shown that the operator $A$ can be expressed in terms of such a symbol as \cite{Ruz-T}
\begin{equation}\label{mul}Af(x)=\sum_{[\xi]\in \widehat{G}}d_{\xi}\text{Tr}[\xi(x)\sigma_A(x,\xi)\widehat{f}(\xi)]. 
\end{equation} We will denote by $\sigma_A(\cdot)$ and $\sigma_A(\cdot,\cdot)$ to the operator-valued symbol and the matrix-valued symbol associated to $A$ respectively.
\begin{lemma}[Matricial symbols vs operator-valued symbols] 
Theorem 10.11.16 in Ruzhansky and Turunen \cite{Ruz} gives the identity $\sigma_A(x,\xi)=\sigma_{\sigma_A(x)}(y,\xi):=\xi(y)^*(\sigma_{A}(x)\xi)(y),$ for all $y\in G.$ In particular, if $y=e_{G}$ is the identity element in $G,$
\begin{eqnarray}\label{connection}
\sigma_A(x)\xi(e_G)=\sigma_A(x,\xi),\,x\in G,\,\,[\xi]\in \widehat{G}.
\end{eqnarray} 
\end{lemma}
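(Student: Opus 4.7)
The plan is to compute both sides of the claimed identity directly from their definitions and show that each side equals the matrix-valued Fourier transform of the right-convolution kernel $R_A(x,\cdot)$.

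\medskip

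\noindent\textbf{Step 1: Recognize $\sigma_A(x)$ as a left-invariant operator.} By the construction recalled in the preceding theorem, $\sigma_A(x)=r(R_A(x))$ is the right-convolution operator with kernel $R_A(x,\cdot)$. A routine computation using the definition of convolution shows that $r(f)$ commutes with left translations $\pi_L(h)$. Consequently, fixing $x$ and writing $B_x:=\sigma_A(x)$, for every representation $\xi$ and every $y\in G$ one has $(B_x\xi)(yz)=\xi(y)(B_x\xi)(z)$, which specialized at $z=e_G$ yields $(B_x\xi)(y)=\xi(y)(B_x\xi)(e_G)$. Therefore
\begin{equation}
\sigma_{B_x}(y,\xi)=\xi(y)^{*}(B_x\xi)(y)=(B_x\xi)(e_G),
\end{equation}
independently of $y$.

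\medskip

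\noindent\textbf{Step 2: Evaluate $(B_x\xi)(e_G)$.} I would unwind the convolution: $(B_x\xi)(e_G)=(\xi*R_A(x,\cdot))(e_G)=\int_G \xi(u)R_A(x,u^{-1})\,du$, and the substitution $v=u^{-1}$ together with unitarity $\xi(v^{-1})=\xi(v)^{*}$ produces
\begin{equation}
(B_x\xi)(e_G)=\int_G R_A(x,v)\xi(v)^{*}\,dv=\mathscr{F}_G R_A(x,\cdot)(\xi).
\end{equation}

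\medskip

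\noindent\textbf{Step 3: Express $\sigma_A(x,\xi)$ through the same Fourier transform.} Starting from $\sigma_A(x,\xi)=\xi(x)^{*}(A\xi)(x)$ and using $K_A(x,y)=R_A(x,xy^{-1})$, I would substitute $u=y^{-1}x$ in $(A\xi)(x)=\int_G K_A(x,y)\xi(y)\,dy$ to obtain
\begin{equation}
(A\xi)(x)=\int_G R_A(x,u)\xi(xu^{-1})\,du=\xi(x)\int_G R_A(x,u)\xi(u)^{*}\,du=\xi(x)\,\mathscr{F}_G R_A(x,\cdot)(\xi).
\end{equation}
Multiplying by $\xi(x)^{*}$ on the left gives $\sigma_A(x,\xi)=\mathscr{F}_G R_A(x,\cdot)(\xi)$, which matches Step~2.

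\medskip

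\noindent\textbf{Step 4: Conclude and specialize.} Combining Steps 1--3 yields $\sigma_{\sigma_A(x)}(y,\xi)=\sigma_A(x,\xi)$ for every $y\in G$. The case $y=e_G$ reduces the left-hand side to $\xi(e_G)^{*}(\sigma_A(x)\xi)(e_G)=(\sigma_A(x)\xi)(e_G)$ since $\xi(e_G)=I_{d_\xi}$; interpreting the paper's notation $\sigma_A(x)\xi(e_G)$ as this value recovers \eqref{connection}.

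\medskip

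The proof is essentially bookkeeping; the only place where care is required is in Step~3, matching the two conventions (right-convolution kernel $R_A$ versus Schwartz kernel $K_A$) and keeping track of the direction in which $\xi(x)$ factors out. This substitution is the single computational hinge on which the identity rests.
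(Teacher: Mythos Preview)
The paper does not give its own proof of this lemma; it simply records the identity as Theorem~10.11.16 of Ruzhansky--Turunen and reads off the special case $y=e_G$. Your proposal supplies exactly the computation behind that citation, namely that both $\sigma_A(x,\xi)$ and $\sigma_{\sigma_A(x)}(y,\xi)$ coincide with the matrix Fourier transform $\mathscr{F}_G R_A(x,\cdot)(\xi)$ of the right-convolution kernel. This is the standard (and essentially only) way to verify the identity, so your approach is correct and matches the cited source.

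One minor bookkeeping slip in Step~3: the relation you write, $K_A(x,y)=R_A(x,xy^{-1})$, is the one obtained from the paper's displayed formula $R_A(x,y)=K_A(x,y^{-1}x)$, but it is \emph{not} consistent with your substitution $u=y^{-1}x$ and the integral $\int_G R_A(x,u)\xi(xu^{-1})\,du$ that follows. For that integral to come out, one needs $K_A(x,y)=R_A(x,y^{-1}x)$, which is in fact the convention in Ruzhansky--Turunen (the paper's displayed kernel relation has the two group factors transposed). With the correct convention your chain of equalities in Step~3 goes through verbatim and $\xi(x)$ factors to the left as you claim, so the conclusion is unaffected; just make the kernel relation and the substitution mutually consistent when writing it up.
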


Now, we want to introduce Sobolev spaces and, for this, we give some basic tools. \noindent Let $\xi\in Rep(G):=\cup \widehat{G},$ if $x\in G$ is fixed, $\xi(x):\mathbb{C}^{d_\xi}\rightarrow \mathbb{C}^{d_\xi}$ is an unitary operator and $d_{\xi}:=\dim \mathbb{C}^{d_\xi} <\infty.$ There exists a non-negative real number $\lambda_{[\xi]}$ depending only on the equivalence class $[\xi]\in \hat{G},$ but not on the representation $\xi,$ such that $-\mathcal{L}_{G}\xi(x)=\lambda_{[\xi]}\xi(x);$ here $\mathcal{L}_{G}$ is the Laplacian on the group $G$ (in this case, defined as the Casimir element on $G$). Let  $\langle \xi\rangle$ denote the function $\langle \xi \rangle=(1+\lambda_{[\xi]})^{\frac{1}{2}}$.  
\begin{definition}\label{sov} For every $s\in\mathbb{R},$ the {\em Sobolev space} $H^s(G)$ on the Lie group $G$ is  defined by the condition: $f\in H^s(G)$ if only if $\langle \xi \rangle^s\widehat{f}\in L^{2}(\widehat{G})$. 
\end{definition}
The Sobolev space $H^{s}(G)$ is a Hilbert space endowed with the inner product $$\langle f,g\rangle_{H^s(G)}=\langle J^{s}f, J^{s}g\rangle_{L^{2}(G)}$$ where, for every $r\in\mathbb{R}$, $J^{s}:H^r(G)\rightarrow H^{r-s}(G)$ is the bounded pseudo-differential operator (Bessel potential) with symbol $\sigma_{J^s}(x,\xi):=\langle \xi\rangle^{s}I_{\xi}$.
\begin{rem}\label{lem} In this paper the notion of Sobolev spaces $H^{s}(G)$ is essential, we will use this spaces in the proof of Lemma \ref{tripleta} and for description of global operators. An important fact is that   every global operator $T$ of order $m$ is a bounded operator from $H^{s}(G)$ into $H^{s-m}(G)$ (see Ruzhansky and Turunen \cite{RT1}).
\end{rem}
Now we introduce, for every $m\in\mathbb{R}$, the H\"ormander class $\Psi^{m}(G)$ of pseudo-differential operators of order $m$ on the compact Lie group $G$. As a compact manifold we consider $\Psi^{m}(G)$ as the set of those operators which, in all local coordinate charts, give rise to pseudo-differential operators in the H\"ormander class $\Psi^{m}(U)$ for an open set $U \subset \mathbb{R}^n$, characterized by symbols satisfying the usual estimates \cite{Hor2}
\begin{equation}
|\partial_{x}^{\alpha}\partial_{\xi}^{\beta}\sigma(x,\xi)|\leq C_{\alpha,\beta}\langle \xi\rangle^{m-|\beta|},
\end{equation}
for all $(x,\xi) \in T^*U \cong \mathbb{R}^{2n}$ and $\alpha,\beta\in \mathbb{N}^n$. This class contains, in particular, differential operator of degree $m>0$ and other well-known operators in global analysis such as heat kernel operators.  
The class  The H\"ormander classes $\Psi^{m}(G)$ where characterized in \cite{Ruz,RWT} by the condition: $A\in \Psi^{m}(G)$ if only if its matrix-valued symbol $\sigma_{A}(x,\xi)$ satisfies the inequalities
\begin{equation}\label{cero}
\Vert \partial_{x}^{\alpha}\mathbb{D}^{\beta}\sigma_{A}(x,\xi)\Vert_{op} \leq C_{\alpha,\beta} \langle \xi\rangle^{m-|\beta|},
\end{equation}
for every $\alpha,\beta\in \mathbb{N}^n.$ For a rather comprehensive treatment of this global calculus we refer to \cite{Ruz}.

\subsection{Fredholm operators on Hilbert spaces, Fredholm modules on associative algebras and the Connes index theorem}
The index is defined for a broad class of operators called Fredholm operators. Now,  we introduce this notion in more detail. For $X,Y$ normed spaces $B(X,Y)$ is the set of bounded linear operators from $X$ into $Y.$ In particular, if $X=Y=H,$ $B(H)\equiv B(H,H)$ denotes the algebra of bounded operators on $H.$ Here, we consider $H=L^2(G).$
\begin{definition}
 If $H_1$ and  $H_2$ are Hilbert spaces, the closed and densely defined operator $A:H_1\rightarrow H_2$ is Fredholm if only if $\text{Ker}(A)$ is finite dimensional and $A(H_1)=\text{Rank}(A)$ is a closed subspace of $H_2$ with finite codimension. In this case, the index of $A$ is defined by $\text{Ind}(A)=\dim  \text{Ker}(A)-\dim  \text{Coker}(A).$ The index formula also can be written as
 $$\text{ind}(A)=\text{dim}\text{Ker} (A)-\text{dim}\text{Ker} (A^*).  $$
\end{definition}

In our analysis we use the Connes index theorem for  odd Fredholm modules on associative algebras. We recall this definition as follows.

\begin{definition}
Let $A$ be an associative algebra over $\mathbb{C}$. An odd Fredholm module over $A$ is a triple $(A,\pi,F)$ consisting of:
\begin{itemize}
\item a Hilbert space $H,$
\item a representation $\pi$ of $A$ as bounded operators on $H,$
\item a self-adjoint operator $F$ such that $F^2=I$ and $[F,\pi(a)]$ is a compact operator on $H$ for all $a\in A.$
\end{itemize}
\end{definition} Additionally, if there exist $p$ such that $[F,\pi(a)]\in L^{p}(H)=\{T\in B(H,H):\sum_\nu [s_\nu(T)]^p<\infty\},$ (here, $\{s_\nu(T) \}_\nu $ denotes the sequence of singlular values of $T$)  we say that the module $(A,\pi,F)$ is $p$-summable.
The corresponding Connes index theorem is the following
(see A. Connes, \cite{Connes:1985}).
\begin{theorem}[Connes Index Theorem]\label{Connes:1985}
Let $(A,\pi,F)$ be a $p$-summable  odd Fredholm module and $P$ given by
\begin{equation}
P=\frac{1}{2}(F+I).
\end{equation}
Then, for every invertible element $u\in A,$ the operator $PuP:PH\rightarrow PH$ is Fredholm and its analytical index is given by
\begin{equation}
\textnormal{ind}(PuP)=\frac{(-1)^{2k+1}}{2^{2k+1}}\textnormal{Tr}[a_0[F,a_1]\cdots [F,a_{2k+1}]],
\end{equation} where $p$ is the smallest odd integer larger than $n,$   $a_i=u^{-1}$ for $i$ even, and $a_i=u$ for $i$ odd.
\end{theorem}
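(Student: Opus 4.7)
The plan is to prove the index formula by combining the Calder\'on-Fedosov higher-trace expression for the Fredholm index with the algebraic identities forced by $F^{2}=I$. The starting point is that $P=\tfrac{1}{2}(F+I)$ with $F=F^{*}$ and $F^{2}=I$ makes $P$ an orthogonal projection, yields the rule $[P,a]=\tfrac{1}{2}[F,a]$ for every $a\in\pi(A)$, and gives $FP=PF$, so the compactness hypothesis on $[F,\pi(a)]$ transfers directly to $[P,\pi(a)]$.

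First I would verify that $PuP$ is Fredholm on $PH$ by exhibiting $Pu^{-1}P$ as a parametrix. Computing
\begin{equation*}
(PuP)(Pu^{-1}P)-P = -Pu[P,u^{-1}]P = -\tfrac{1}{2}Pu[F,u^{-1}]P,
\end{equation*}
and the symmetric identity on the other side, both error terms are compact, so $PuP$ is Fredholm. Next I would invoke the Calder\'on-Fedosov formula: for any $m\in\mathbb{N}$ such that $(I-QA)^{m}$ and $(I-AQ)^{m}$ lie in $L^{1}(H)$,
\begin{equation*}
\mathrm{ind}(A)=\mathrm{Tr}(I-QA)^{m}-\mathrm{Tr}(I-AQ)^{m}.
\end{equation*}
Applied to $A=PuP$, $Q=Pu^{-1}P$, and $m=k+1$, each power expands into products of $2k+1=p$ commutators of the form $[F,u^{\pm1}]$; by $p$-summability each such commutator lies in $L^{p}(H)$, and H\"older's inequality for Schatten ideals ensures that the products are trace class, so the right-hand side above is finite and well defined.

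The heart of the proof is collapsing this expansion to the single monomial $a_{0}[F,a_{1}]\cdots[F,a_{2k+1}]$. I would proceed by repeatedly using $FP=PF$ together with the Leibniz rule $[F,ab]=[F,a]b+a[F,b]$ to push the projections out of the trace, then collect powers of $\tfrac{1}{2}$ and signs to recover the prefactor $(-1)^{2k+1}/2^{2k+1}$. The main obstacle I expect is managing this bookkeeping cleanly and, in parallel, proving independence of $k$ for $2k+1\geq p$, which is equivalent to showing that $\tau_{2k+1}(a_{0},\dots,a_{2k+1})=\mathrm{Tr}(a_{0}[F,a_{1}]\cdots[F,a_{2k+1}])$ is a cyclic cocycle stable under Connes' periodicity operator $S$. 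Should the combinatorics become unwieldy, an alternative route I would consider is to establish the cocycle and cyclicity identities abstractly, verify the $K_{1}$-pairing interpretation $\mathrm{ind}(PuP)=\langle[\tau_{2k+1}],[u]\rangle$ on general grounds, and then fix the overall normalisation by evaluating both sides on the generator $u(z)=z$ of $K_{1}(C^{\infty}(\mathbb{S}^{1}))$, where the identity reduces to the classical Noether-Gohberg-Krein formula \eqref{ToeplitzNGK}.
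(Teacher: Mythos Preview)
The paper does not prove this theorem at all: it is stated as the \emph{Connes Index Theorem} with a bare citation to \cite{Connes:1985}, and is then used as a black box in the proof of Theorem~\ref{maintheoremDC}. There is therefore no paper-proof to compare against.

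As to the proposal itself: the outline is broadly in line with standard proofs of Connes' formula. The parametrix computation for Fredholmness is correct, and invoking the Calder\'on--Fedosov higher-trace identity with $m=k+1$ is a legitimate starting point, since each factor $I-QA$ and $I-AQ$ is built from a single commutator $[F,\cdot]\in L^{p}$ and so their $(k+1)$st powers are trace class for $2(k+1)\geq p$. The step you flag as the ``heart of the proof''---reducing $\mathrm{Tr}(I-QA)^{k+1}-\mathrm{Tr}(I-AQ)^{k+1}$ to the single term $\mathrm{Tr}\bigl(a_{0}[F,a_{1}]\cdots[F,a_{2k+1}]\bigr)$ with the correct prefactor---is where all the content lies, and you have not actually carried it out. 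In the standard argument one uses the identity $F[F,a]=-[F,a]F$ (from $F^{2}=I$) together with cyclicity of the trace to show that the Calder\'on--Fedosov difference collapses; merely invoking ``Leibniz and pushing projections out'' is not enough, since one must also handle the alternation of $u$ and $u^{-1}$ and the cancellation between the two traces. Your fallback plan (establish that $\tau_{2k+1}$ is a cyclic cocycle, identify the pairing with $K_{1}$, and normalise on $\mathbb{S}^{1}$) is in fact closer to Connes' original route and is the cleaner way to finish.
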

In the next section we compute the index of a Toeplitz operator $T_{f}=PM_{f}P.$ In order to use the Connes theorem, we use (the well know fact) that $(C^{\infty}(G),\pi,2P-I)$ is a odd Fredholm module,  where $\pi$ is the representation $\pi(g)=M_{g}$ defined from $C^{\infty}(G)$ into the algebra of bounded operators on $L^2(G).$

\section{The index of Toeplitz operators on compact Lie groups}\label{indexsection}

In this section, we compute the index of Toeplitz operators by using trace formulae for global operators of trace class and the index theorem of Connes mentioned above. The corresponding statement for trace class global operators is the following (for the proof, we refer the reader to Cardona \cite{CardonaIndex}. The proof is based in the arguments developed by Delgado and Ruzhansky \cite{DR,DR1,DR3}).
\begin{theorem}\label{traceondiagonal} Let $A$ be a pseudo-differential operator on $\Psi^{m}(G),$ $m< -\dim(G).$ Then $A$ is trace class on $L^2(G)$ and 
$$ \textnormal{Tr}(A)=\int_{G}\sum_{[\xi]\in\hat{G}}d_{\xi}\textnormal{Tr}[\sigma_{A}(x,\xi)]dx, $$ where $\sigma_A(x,\xi)$ is the matrix-valued symbol of $A.$
\end{theorem}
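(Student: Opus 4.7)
The plan is to deduce Theorem \ref{traceondiagonal} by combining three ingredients: (i) the symbolic characterization \eqref{cero} of the H\"ormander class $\Psi^{m}(G)$, (ii) the Weyl-type asymptotics for the eigenvalues of the Laplacian on $G$, which control the growth of $d_{\xi}$ in terms of $\langle\xi\rangle$, and (iii) the Schatten-class characterizations of Delgado and Ruzhansky together with the classical fact that a trace class operator on a compact manifold with continuous Schwartz kernel has trace equal to the integral of its kernel over the diagonal.

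First, I would establish that $A$ is actually trace class. Under \eqref{cero} with $m<-\dim(G)$, the symbol satisfies $\|\sigma_{A}(x,\xi)\|_{\textnormal{op}}\leq C\langle\xi\rangle^{m}$. I would factor $A=B\circ J^{s}$ with $s$ chosen so that $B\in\Psi^{m-s}(G)$ and $J^{s}\in\Psi^{s}(G)$ are each Hilbert--Schmidt; by the Plancherel identity this reduces to checking that $\sum_{[\xi]}d_{\xi}^{2}\langle\xi\rangle^{2(m-s)}$ and $\sum_{[\xi]}d_{\xi}^{2}\langle\xi\rangle^{2s}$ are both finite, which by Weyl's law holds provided $\dim(G)-2s<0$ and $\dim(G)+2(m-s)<0$, i.e.\ any $s$ with $\dim(G)/2<s<-\dim(G)/2-m+\dim(G)/2=-(m+\dim(G))/2+\dim(G)/2$; such an $s$ exists precisely when $m<-\dim(G)$. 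This is essentially the approach of Delgado and Ruzhansky invoked in \cite{CardonaIndex}.

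Next, I would identify the Schwartz kernel of $A$ from the quantization formula \eqref{mul}. Substituting $\widehat{f}(\xi)=\int_{G}f(y)\xi(y)^{*}dy$ into \eqref{mul} and interchanging sum and integral (justified by absolute convergence, since $\sum_{[\xi]}d_{\xi}\|\sigma_{A}(x,\xi)\|_{\textnormal{tr}}\leq\sum_{[\xi]}d_{\xi}^{2}\|\sigma_{A}(x,\xi)\|_{\textnormal{op}}\leq C\sum_{[\xi]}d_{\xi}^{2}\langle\xi\rangle^{m}<\infty$), I obtain
\begin{equation*}
K_{A}(x,y)=\sum_{[\xi]\in\widehat{G}}d_{\xi}\textnormal{Tr}\bigl[\xi(x)\sigma_{A}(x,\xi)\xi(y)^{*}\bigr],
\end{equation*}
which is jointly continuous on $G\times G$ by the same absolute convergence argument.

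Finally, since $A$ is trace class and $K_{A}$ is continuous on a compact manifold, the classical Brislawn-type identity gives $\textnormal{Tr}(A)=\int_{G}K_{A}(x,x)\,dx$. On the diagonal, the unitarity $\xi(x)^{*}\xi(x)=I_{d_{\xi}}$ together with the cyclicity of the trace yields $\textnormal{Tr}[\xi(x)\sigma_{A}(x,\xi)\xi(x)^{*}]=\textnormal{Tr}[\sigma_{A}(x,\xi)]$, from which the desired formula follows by integrating over $G$. The main obstacle is the trace-class claim itself, since the diagonal-trace identity is routine once continuity of the kernel and the Schatten property are in hand; this is why I would rely on the Delgado--Ruzhansky Schatten-class framework rather than attempting a direct estimate on $\sum d_{\xi}\|\sigma_{A}(x,\xi)\|_{\textnormal{tr}}$, which only bounds the absolute value of the putative trace but does not by itself certify membership in $\mathcal{S}_{1}(L^{2}(G))$.
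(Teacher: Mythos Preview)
Your approach is essentially the one the paper defers to: the paper does not prove Theorem~\ref{traceondiagonal} itself but cites \cite{CardonaIndex} and the Delgado--Ruzhansky Schatten framework \cite{DR,DR1,DR3}, which is exactly the ``trace class via a product of two Hilbert--Schmidt factors, then integrate the continuous kernel on the diagonal'' argument you outline. One minor slip to fix in your bookkeeping: the Weyl-law convergence condition for $\sum_{[\xi]} d_{\xi}^{2}\langle\xi\rangle^{2s}<\infty$ is $\dim(G)+2s<0$, not $\dim(G)-2s<0$; with the correct signs the admissible range reads $m+\dim(G)/2<s<-\dim(G)/2$, which is nonempty precisely when $m<-\dim(G)$, so your conclusion stands.
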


In order to apply the Connes theorem, we need the following well known  lemma. For completeness we provide a proof.

\begin{lemma}\label{tripleta}
The triple $(C^{\infty}(G),\pi,2P-I)$ where $\pi:C^\infty(G)\rightarrow B(L^2(G)),$ is the representation defined at $g$ by $\pi(g)=M_{g}$  (multiplication operator by $g,$) is an $p$-summable odd Fredholm module for $p>n:=\dim(G)$.
\end{lemma}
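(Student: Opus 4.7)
The plan is to reduce the three Fredholm-module axioms to a single Schatten-class estimate on the commutator $[P, M_f]$. The algebraic part is immediate: since $P^*=P$ and $P^2=P$, one has $F=2P-I$ self-adjoint and $F^2=4P^2-4P+I=I$; and $\pi$ is obviously a representation because $M_{fg}=M_fM_g$. Since $[F,\pi(f)] = 2[P,M_f]$, the entire content of the lemma is that this commutator lies in the Schatten ideal $\mathcal{L}^p(L^2(G))$ for every $p>n$ (which in particular implies compactness).

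To control the commutator, I would read both operators pseudo-differentially. The multiplication operator $M_f$ with $f\in C^\infty(G)$ lies in $\Psi^0(G)$, and I interpret $\Pi$ in the natural Toeplitz setting as a zeroth-order pseudo-differential projection (the analogue of the Szeg\H{o}/Hardy projection, which is the standing assumption that makes the main theorem meaningful). The Ruzhansky--Turunen symbolic calculus recorded in \eqref{cero}, together with the asymptotic formula for the symbol of a composition, implies that the leading symbols of two zeroth-order operators commute, so the commutator drops one order of regularity: $[P,M_f]\in \Psi^{-1}(G)$.

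The final step is the standard Schatten-class criterion on a compact $n$-manifold: $\Psi^{-m}(G)\subset \mathcal{L}^p(L^2(G))$ whenever $mp>n$. Concretely, Weyl's asymptotic for the Casimir gives $\langle\xi\rangle \asymp k^{1/n}$ when the eigenvalues of $1-\mathcal{L}_G$ are enumerated with multiplicity $d_\xi^2$, so the Bessel potential $J^{-m}$ has singular values of order $k^{-m/n}$, which is $p$-summable precisely when $mp>n$. Factoring an arbitrary order $-m$ operator as $J^{-m}$ times a bounded operator via \eqref{cero} and taking $m=1$ yields $[P,M_f]\in\mathcal{L}^p$ for all $p>n$, which is exactly what the $p$-summability axiom demands.

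The main conceptual obstacle lies in the opening move of paragraph two: for a completely arbitrary closed subspace $\Pi'\subset L^2(G)$, the commutator $[\Pi,M_f]$ need not even be compact, let alone in a Schatten class. The lemma is therefore tacitly restricted to projections of Szeg\H{o}/spectral type, i.e.\ those that are zeroth-order pseudo-differential operators in the calculus of Section \ref{Preliminaries}. Once that is in force, everything else is a routine deployment of the matrix-valued symbolic calculus together with Weyl-type eigenvalue asymptotics.
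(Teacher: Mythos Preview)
Your proof is correct and follows essentially the same route as the paper: verify the algebraic axioms $F=F^*$, $F^2=I$ from $P=P^*=P^2$, observe that $[P,M_f]\in\Psi^{-1}(G)$ because the two zeroth-order factors share a principal symbol, and conclude $p$-summability for $p>n$ from the standard Schatten embedding for negative-order operators. Your explicit Weyl-asymptotics justification of $\Psi^{-1}(G)\subset\mathcal{L}^p(L^2(G))$ is more detailed than the paper's one-line appeal, and your closing caveat---that the argument only goes through when $\Pi$ is itself a zeroth-order pseudo-differential projection, not for an arbitrary closed subspace---is exactly the tacit hypothesis the paper is using without stating it.
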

\begin{proof}
We only need to prove that $F=2P-I$ is self-adjoint, that $F^2=I$   and the compactness of every commutator $[F,\pi(g)].$ Because $P^2=P$ and $P$ is orthogonal, we deduce that $F$ is self-adjoint and $F^2=4P^2-4P+I=I.$ Now, if $g\in C^{\infty}(G)$ the operators $M_{g}P$ and $PM_g$ same in local coordinates same principal symbol and the order of the commutator $T=[P,M_g]$ is $-1.$ This implies that $|T|\in L^p(L^2(G))$ for $p>n=\dim(G).$ 
\end{proof}
Now, with the machinery presented above, we can give a short argument for proving our main theorem.
\begin{theorem}[Index of Toeplitz operators]\label{maintheoremDC}
Let us consider a smooth  and invertible function $f$ on $G.$  Let  $\Pi:L^2(G)\rightarrow \Pi'$ be the orthogonal projection of a closed subspace  $\Pi'\subset L^2(G)$. If  $T_f=\Pi M_{f} \Pi$ is the Toeplitz operator  with symbol $f,$ then $T_f$ extends to a   Fredhlom operator on $L^2(G)$ and its index is given by
\begin{align}
\textnormal{ind}(T_f)=\int\limits_{G}\sum_{[\xi]\in\hat{G}}d_{\xi}\textnormal{Tr}[\sigma_{I_{\Pi,f}}(x)\xi(e_G)     ]dx,\,\,I_{\Pi,f}:=f^{-1}\underbrace{[{\Pi},f][{\Pi},{{f^{-1}}}]\cdots [{\Pi},f]}_{n+2-times} 
\end{align} if $n$ is odd,  or
\begin{align}
\textnormal{ind}(T_f)=\int\limits_{G}\sum_{[\xi]\in\hat{G}}d_{\xi}\textnormal{Tr}[\sigma_{ I_{\Pi,f} }(x)\xi(e_G)    ]dx,\,\,I_{\Pi,f}:=f^{-1}\underbrace{[{\Pi},f][{\Pi},f^{-1}]\cdots [{\Pi},f]}_{n+1-times},
\end{align}  if $n$ is even, where $\sigma_{I_{\Pi,f}}$ is the Ruzhansky-Turunen operator valued symbol associated to $I_{\Pi,f}.$
\end{theorem}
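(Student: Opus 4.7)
The plan is to combine three ingredients already established in the preceding sections: the Connes index theorem (Theorem \ref{Connes:1985}), Lemma \ref{tripleta} asserting that $(C^\infty(G),\pi,2\Pi-I)$ is a $p$-summable odd Fredholm module for every $p>n=\dim G$, and the trace formula for global pseudo-differential operators of sufficiently negative order (Theorem \ref{traceondiagonal}). The correspondence between operator-valued and matrix-valued symbols recorded in equation \eqref{connection} is what converts a matrix-trace formula into the operator-valued one appearing in the statement.

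First I would set $F:=2\Pi-I$. Lemma \ref{tripleta} exhibits $(C^{\infty}(G),\pi,F)$ as a $p$-summable odd Fredholm module for any $p>n$, so Connes' theorem applies with $u=f$, invertible in $C^{\infty}(G)$ by hypothesis. It guarantees that $T_f=\Pi M_f\Pi$ is Fredholm on $\Pi L^2(G)$ and yields
\begin{equation*}
\textnormal{ind}(T_f)=\frac{(-1)^{2k+1}}{2^{2k+1}}\,\textnormal{Tr}\bigl[a_0[F,a_1][F,a_2]\cdots[F,a_{2k+1}]\bigr],
\end{equation*}
where $2k+1$ is the smallest odd integer strictly greater than $n$ and $a_i=f^{-1}$ (resp.\ $a_i=f$) for $i$ even (resp.\ odd). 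Concretely this integer equals $n+2$ when $n$ is odd and $n+1$ when $n$ is even, which matches the parity split in the statement. The identity $[F,g]=2[\Pi,g]$ cancels the factor $2^{2k+1}$ (modulo sign conventions for the Toeplitz index) and reduces the operator inside the trace to $I_{\Pi,f}=f^{-1}[\Pi,f][\Pi,f^{-1}]\cdots[\Pi,f]$ with the prescribed alternating pattern.

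Next, since each factor $[\Pi,g]$ is a pseudo-differential operator of order $-1$ (this is the content of Lemma \ref{tripleta}), the product $I_{\Pi,f}$ belongs to $\Psi^{m}(G)$ for some $m\le -(2k+1)<-n$. Theorem \ref{traceondiagonal} therefore applies and gives
\begin{equation*}
\textnormal{Tr}(I_{\Pi,f})=\int_{G}\sum_{[\xi]\in\widehat{G}}d_{\xi}\,\textnormal{Tr}\bigl[\sigma_{I_{\Pi,f}}(x,\xi)\bigr]\,dx.
\end{equation*}
Applying the conversion $\sigma_{I_{\Pi,f}}(x,\xi)=\sigma_{I_{\Pi,f}}(x)\xi(e_G)$ from \eqref{connection} then produces the expression claimed in the theorem.

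The main obstacle is the parity-dependent bookkeeping: one must simultaneously satisfy Connes' requirement to use the smallest admissible odd number of commutators and verify that this same number is sufficient for the product to lie in $\Psi^{m}(G)$ with $m<-n$, so that Theorem \ref{traceondiagonal} is applicable. Both constraints force $2k+1>n$, which gives $n+2$ in the odd case and $n+1$ in the even case. A secondary technical point is the pseudo-differential character of the projection $\Pi$ itself, which is implicit in Lemma \ref{tripleta}; once $[\Pi,M_g]$ is recognised as a pseudo-differential operator of order $-1$, the argument reduces to a clean concatenation of Theorem \ref{Connes:1985}, Theorem \ref{traceondiagonal}, and formula \eqref{connection}.
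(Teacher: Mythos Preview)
Your proposal is correct and follows essentially the same route as the paper's proof: invoke Lemma~\ref{tripleta} to obtain the $p$-summable Fredholm module, apply the Connes index theorem (Theorem~\ref{Connes:1985}), simplify via $[F,g]=2[\Pi,g]$, note that the resulting product of commutators lies in $\Psi^{-(2k+1)}(G)$ with $2k+1>n$, and then combine Theorem~\ref{traceondiagonal} with the identity~\eqref{connection}. Your explicit parity bookkeeping (identifying $2k+1$ as $n+2$ or $n+1$) is in fact a touch more detailed than the paper's own argument.
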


\begin{proof}
Let us observe that the index of $T$ can be computed as,
\begin{equation}
\textnormal{ind}(T_f)= \textnormal{ind}(\Pi f\Pi)=\frac{(-1)^{2k+1}}{2^{2k+1}}\textnormal{Tr}[a_0[F,a_1]\cdots [F,a_{2k+1}]],
\end{equation} where $p=2k+1> n:=\dim (G) ,$ is the smallest odd integer larger that $n,$ $a_i=f^{-1}$ for $i$ even, and $a_i=f$ for $i$ odd. Here we have used that $(C^{\infty}(G),M_f,2\Pi-I)$ is a $p$-summable odd Fredholm module as well as the Connes index Theorem. Since $F=2P-I,$ we have
\begin{equation}
[F,a_1]\cdots [F,a_{2k+1}]=2^{2k+1}[\Pi,a_1]\cdots [\Pi,a_{2k+1}],
\end{equation} and 
\begin{equation}
\textnormal{ind}(T_f)= \textnormal{ind}(\Pi f\Pi)=-\textnormal{Tr}[a_0[\Pi,a_1]\cdots [\Pi,a_{2k+1}]].
\end{equation} Now we need to compute the trace of the operator $I_{\Pi,f}=a_0[\Pi,a_1]\cdots [\Pi,a_{2k+1}]\in S^{-p}(G),$  $p=2k+1>n,$ using its operator-valued symbol. This can be done with Theorem \ref{traceondiagonal}.  From, Lemma \ref{connection}, we have the following matricial identity
\begin{eqnarray}
\sigma_{I_{\Pi,f}}(x)\xi(e_G)=\sigma_{I_{\Pi,f}}(x,\xi),\,x\in G,\,\,[\xi]\in \widehat{G}.
\end{eqnarray}
From Theorem \ref{traceondiagonal}, we have
\begin{align*}
\textnormal{ind}(T_f) &= \textnormal{ind}(\Pi f\Pi)=-\textnormal{Tr}[I_{\Pi,f}]
=-\int_{G}\sum_{[\xi]\in\hat{G}}d_{\xi}\textnormal{Tr}[\sigma_{I_{\Pi,f}}(x,\xi)]dx.
\end{align*} So, we have proved that
\begin{align}
\textnormal{ind}(T_f)=\int\limits_{G}\sum_{[\xi]\in\hat{G}}d_{\xi}\textnormal{Tr}[\sigma_{I_{\Pi,f}}(x)\xi(e_G)     ]dx,\,\,I_{\Pi,f}:=f^{-1}\underbrace{[{\Pi},f][{\Pi},{{f^{-1}}}]\cdots [{\Pi},f]}_{n+2-times} 
\end{align} if $n$ is odd,  or
\begin{align}
\textnormal{ind}(T_f)=\int\limits_{G}\sum_{[\xi]\in\hat{G}}d_{\xi}\textnormal{Tr}[\sigma_{ I_{\Pi,f} }(x)\xi(e_G)    ]dx,\,\,I_{\Pi,f}:=f^{-1}\underbrace{[{\Pi},f][{\Pi},f^{-1}]\cdots [{\Pi},f]}_{n+1-times},
\end{align}  if $n$ is even. Thus, we finish the proof.

\end{proof}
Finally, we present the following remark on our index theorem and the classical index theorem for Toeplitz operators.
\begin{rem}[Noether-Gohberg-Krein index formula]
Let us consider the operator $D=\frac{1}{ i}\frac{d}{dt}$ on $\mathbb{S}^{1}\equiv[-\pi,\pi),$ and  the odd Fredholm module $(C^{\infty}(\mathbb{S}^1), \pi, F),$ where $F=D|D|^{-1},$ out of $\ker(D)$ and $F=I$ on $\ker(D).$ Then $P=\frac{I+F}{2}$ is the orthogonal projection from $L^2(\mathbb{S}^1)$ into the Hardy space $H^1(\mathbb{S}^1)=\{f\in L^2:\widehat{f}(n)=0,\textnormal{ for }n<0\},$ in fact
\begin{equation}
P(e^{int})=\frac{1}{2}(e^{int}+(-1)^{n}e^{int}), \,\,n\in\mathbb{Z}.
\end{equation}
In this case (see e.g., R. Rodsphon\cite{Rodsphon} Example 3, p. 21, or  N. Higson \cite{Higson}, p. 19)
\begin{equation}
\textnormal{ind}(PM_fP)=-\textnormal{Tr}[f^{-1}[P,f]]=-\frac{1}{2\pi i}\int_{\mathbb{S}^1}f^{-1}df.
\end{equation} So, in our index formula in Theorem \ref{maintheorem} for the abelian compact Lie group $G=\mathbb{S}^1,$ the unitary dual can be identified with $\{e^{inx}:n\in\mathbb{Z}\},$ so if we take $P=\Pi$ as the projection to the subspace generated by $\{e^{inx}:n\geq 0\},$ we obtain $\Pi'=H^1(\mathbb{S}^1)$ (the Hardy space modeled on $L^1(\mathbb{S}^1)$) and we recover the historical result by Noether-Gohberg-Krein \eqref{ToeplitzNGK}.
\end{rem}

\section{The index of Toeplitz operators on closed 3-manifolds }\label{indexsectioncompact}

\subsection{Ruzhansky-Turunen construction}

Let us consider the compact Lie group $\textnormal{SU}(2)\cong \mathbb{S}^3$ consisting of those orthogonal matrices $A$ in $\mathbb{C}^{2\times 2},$ with $\det(A)=1$.   We recall that the unitary dual of $\textnormal{SU}(2)$ (see \cite{Ruz}) can be identified as
\begin{equation}
\widehat{\textnormal{SU} (2)}\equiv \{ [\xi_{l}]:2l\in \mathbb{N}, d_{l}:=\dim \xi_{l}=(2l+1)\}.
\end{equation}

Let us assume that  $\mathbb{M}$  is a simply connected closed 3-manifold, (this means  that every
simple closed curve within the manifold can be deformed continuously to a point). By the Poincar\'e conjecture (c.f. \cite{Poincare})  proved by Perelman (see Perelman \cite{Perelman1,Perelman2,Perelman3,Perelman4} and Morgan\cite{Morgan}), $\mathbb{M}$ is diffeomorphic to $\mathbb{S}^3\simeq \textnormal{SU}(2).$ Every topological 3-manifold admits a differentiable structure and every homeomorphism between
smooth 3-manifolds can be approximated by a diffeomorphism. Thus, classification results
about topological 3-manifolds up to homeomorphism and about smooth 3-manifolds up to diffeomorphism
are equivalent (see Morgan \cite{Morgan}). 

Let $\Phi:\textnormal{SU}(2)\rightarrow \mathbb{M}$ be a diffeomorphism. By following Ruzhansky and Turunen, \cite{Ruz}, pag. 578, $\mathbb{M}$ can be endowed with the natural Lie group structure induced by $\Phi.$ In fact, if $x,y$ are coordinate points in $\mathbb{M}$ we can define
\begin{equation}\label{Mcompact}
    x\cdot y:=\Phi(  \Phi^{-1}(x)\times \Phi^{-1}(x)   ),
\end{equation}where $\times$ denotes the product of matrices on  $\textnormal{SU}(2). $ We have a Frechet isorphism $C^\infty(\textnormal{SU}(2))\simeq C^\infty(\mathbb{M}) $ defined by 
\begin{equation}
\Phi_*:C^\infty(\textnormal{SU}(2))\rightarrow C^\infty(\mathbb{M}),\,\,\Phi_*(f):=f\circ \Phi^{-1};\,\,
\Phi^*:C^\infty(\mathbb{M})\rightarrow C^\infty(\textnormal{SU}(2)),\,\,\Phi_*(g):=g\circ \Phi. 
\end{equation} Since $L^2(M)=\Phi_{*}(L^2(\textnormal{SU}(2))),$ and the Lie group structure on $\mathbb{M}$ provides a Peter-Weyl theorem on  $\mathbb{M}, $  we have $\widehat{\textnormal{SU}(2)}=\frac{1}{2}\mathbb{N}_0\simeq \widehat{\mathbb{M}}$ in the sense that 
\begin{eqnarray}
\Phi_*:\widehat{\textnormal{SU}(2)  }\rightarrow \widehat{M},\,\,\Phi_{*}[\xi_\ell]=[\Phi_*\xi_{\ell}],\,\,\Phi_*\xi_\ell:=\xi_\ell\circ \Phi\equiv [\xi_{\ell,ij}\circ \Phi]_{i,j=-\ell}^{\ell},\,\ell\in\frac{1}{2}\mathbb{N}_0,
\end{eqnarray} is a well defined isomorphism. We have used that  $\Phi_*:C^\infty(\textnormal{SU}(2))\rightarrow C^\infty(\mathbb{M})$ extends to a linear unitary bijection from $L^2(\textnormal{SU}(2))$ into $L^2(\mathbb{M}) ,$ via
\begin{equation}
\langle g,h\rangle_{L^2(\mathbb{M})}=\langle g\circ \Phi,h\circ \Phi\rangle_{L^2(\textnormal{SU}(2))}=\langle  \Phi_*(g),\Phi_*(h) \rangle_{L^2(\textnormal{SU}(2))}.
\end{equation}
As it was pointed out in \cite{Ruz}, this immediately implies that the whole construction of matrix-valued symbols on $\mathbb{M}$ is equivalent to that on $\textnormal{SU}(2).$
Because we can endowed to $\mathbb{M}$ with a Lie group structure, Theorem \ref{maintheorem} can be used to analise the index of Toeplitz operators on $L^2(\mathbb{M}).$

\subsection{Toeplitz operators}
Theorem \eqref{maintheoremDC} implies the following result.
\begin{corollary}\label{maintheoremDC2}
Let us consider a smooth  function and invertible function $\omega$ on $\mathbb{M}.$  Let  $\Pi:L^2(\mathbb{M})\rightarrow \Pi'$ be the orthogonal projection of a closed subspace  $\Pi'\subset L^2(\mathbb{M})$. If  $T_\omega=\Pi M_{\omega} \Pi$ is the Toeplitz operator  with symbol $\omega,$ then $T_\omega$ extends to a  extends to a Fredhlom operator on $L^2(\mathbb{M})$ and its index is given by
\begin{align}
\textnormal{ind}(T_\omega)=\int\limits_{\mathbb{M}}\sum_{\ell \in \frac{1}{2}\mathbb{N}_0}(2\ell+1)\textnormal{Tr}[\sigma_{ \omega^{-1} [\Pi,\omega][\Pi,\omega^{-1}][\Pi,\omega][\Pi,\omega^{-1}][\Pi,\omega]} (x)   \xi_\ell(e_\mathbb{M}))     ]dx, 
\end{align} where $e_\mathbb{M}=\Phi(e_\textnormal{SU(2)}).$
\end{corollary}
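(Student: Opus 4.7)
The plan is to reduce the statement to Theorem \ref{maintheoremDC} applied to the compact Lie group $\textnormal{SU}(2)$, using the Poincar\'e diffeomorphism $\Phi:\textnormal{SU}(2)\rightarrow \mathbb{M}$ and the Lie group structure \eqref{Mcompact} it induces on $\mathbb{M}$. First I would pull everything back. Set $\tilde{\omega}:=\Phi^{*}\omega =\omega\circ \Phi\in C^{\infty}(\textnormal{SU}(2))$, which is smooth and invertible because $\omega$ is. Using that $\Phi_{*}:L^{2}(\textnormal{SU}(2))\rightarrow L^{2}(\mathbb{M})$ is a unitary isomorphism (by the identity $\langle g,h\rangle_{L^{2}(\mathbb{M})}=\langle \Phi^{*}g,\Phi^{*}h\rangle_{L^{2}(\textnormal{SU}(2))}$ recalled in the construction above), the conjugate $\tilde{\Pi}:=\Phi^{*}\Pi\Phi_{*}$ is the orthogonal projection onto the closed subspace $\Phi^{*}(\Pi')\subset L^{2}(\textnormal{SU}(2))$, and $\Phi^{*}M_{\omega}\Phi_{*}=M_{\tilde{\omega}}$. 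Hence
\begin{equation*}
\Phi^{*}T_{\omega}\Phi_{*}= \tilde{\Pi} M_{\tilde{\omega}} \tilde{\Pi}=T_{\tilde{\omega}},
\end{equation*}
so $T_{\omega}$ is unitarily equivalent to the Toeplitz operator $T_{\tilde{\omega}}$ on $\textnormal{SU}(2)$; in particular $T_{\omega}$ is Fredholm and $\textnormal{ind}(T_{\omega})=\textnormal{ind}(T_{\tilde{\omega}})$.

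Next, since $n:=\dim(\textnormal{SU}(2))=3$ is odd, Theorem \ref{maintheoremDC} yields the index formula with $n+2=5$ commutators:
\begin{equation*}
\textnormal{ind}(T_{\tilde{\omega}})=\int_{\textnormal{SU}(2)}\sum_{[\xi]\in\widehat{\textnormal{SU}(2)}}d_{\xi}\textnormal{Tr}\bigl[\sigma_{\tilde{I}}(y)\xi(e_{\textnormal{SU}(2)})\bigr]\,dy,
\end{equation*}
where $\tilde{I}:=\tilde{\omega}^{-1}[\tilde{\Pi},\tilde{\omega}][\tilde{\Pi},\tilde{\omega}^{-1}][\tilde{\Pi},\tilde{\omega}][\tilde{\Pi},\tilde{\omega}^{-1}][\tilde{\Pi},\tilde{\omega}]$. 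Under the standard identification of the unitary dual $\widehat{\textnormal{SU}(2)}=\{[\xi_{\ell}]:\ell\in\tfrac{1}{2}\mathbb{N}_{0}\}$ recalled above, with $d_{\xi_{\ell}}=2\ell+1$, the sum becomes $\sum_{\ell\in \frac{1}{2}\mathbb{N}_{0}}(2\ell+1)\textnormal{Tr}[\cdots]$.

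Finally I would push the formula forward through $\Phi$. The isomorphism $\Phi_{*}:\widehat{\textnormal{SU}(2)}\rightarrow \widehat{\mathbb{M}}$, $[\xi_{\ell}]\mapsto [\Phi_{*}\xi_{\ell}]$ with $\Phi_{*}\xi_{\ell}=\xi_{\ell}\circ\Phi^{-1}$, together with the unitarity of $\Phi_{*}$ on $L^{2}$ (so the Haar measure $dg$ on $\textnormal{SU}(2)$ pushes forward to the invariant measure $dx$ on $\mathbb{M}$), converts the integrand $\sigma_{\tilde{I}}(y)\xi_{\ell}(e_{\textnormal{SU}(2)})$ into $\sigma_{I_{\Pi,\omega}}(\Phi(y))\xi_{\ell}(e_{\mathbb{M}})$ at $x=\Phi(y)$, where $I_{\Pi,\omega}:=\omega^{-1}[\Pi,\omega][\Pi,\omega^{-1}][\Pi,\omega][\Pi,\omega^{-1}][\Pi,\omega]$ and $e_{\mathbb{M}}=\Phi(e_{\textnormal{SU}(2)})$. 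The change of variables then gives exactly the stated identity.

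The main obstacle is not conceptual but a naturality check: one has to verify that the Ruzhansky-Turunen operator valued symbol behaves equivariantly under the pullback by $\Phi$, i.e.\ that the right-convolution kernel and the right regular representation $\pi_{R}$ intertwine with $\Phi_{*}$ in such a way that $\sigma_{\Phi^{*}A\Phi_{*}}(y)=\Phi^{*}\sigma_{A}(\Phi(y))\Phi_{*}$ on $L^{2}(\textnormal{SU}(2))$. This follows from the very definition \eqref{Mcompact} of the group law on $\mathbb{M}$, which was arranged precisely so that $\Phi$ becomes a Lie group isomorphism, but it must be written out carefully for the commutators $[\Pi,\omega]$, $[\Pi,\omega^{-1}]$ to ensure that the 5-fold commutator product transports correctly. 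Once this is done, the corollary follows immediately from Theorem \ref{maintheoremDC}.
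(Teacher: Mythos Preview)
Your proposal is correct and follows essentially the same idea as the paper: deduce the formula from Theorem \ref{maintheoremDC} via the Lie group structure on $\mathbb{M}$ induced by the Poincar\'e diffeomorphism $\Phi$, with $n=3$ odd so that $n+2=5$ commutators appear and $\widehat{\mathbb{M}}\simeq\tfrac{1}{2}\mathbb{N}_0$, $d_{\xi_\ell}=2\ell+1$.

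The only difference is one of packaging. You pull everything back to $\textnormal{SU}(2)$, apply Theorem \ref{maintheoremDC} there, and then push forward, which forces you to verify the naturality of the Ruzhansky--Turunen symbol under $\Phi_*$ (the ``obstacle'' you flag). The paper instead observes that once $\mathbb{M}$ is equipped with the product \eqref{Mcompact} it \emph{is} a compact Lie group in its own right, with unitary dual $\{[\Phi_*\xi_\ell]:\ell\in\tfrac{1}{2}\mathbb{N}_0\}$ already described by the Ruzhansky--Turunen construction, and simply applies Theorem \ref{maintheoremDC} directly with $G=\mathbb{M}$. This sidesteps the transport argument entirely; your route is a bit more explicit about the role of $\Phi$, while the paper's is shorter.
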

\begin{proof}
With the product defined in \eqref{Mcompact}, $\mathbb{M}  $ has the Lie group structure diffeomorphic to that on $\textnormal{SU}(2).$ The previous Ruzhansky-Turunen construction gives $\widehat{\mathbb{M}}\simeq \frac{1}{2}\mathbb{N}_0,$ and every unitary and strongly continuous unitary representation on $\widehat{\mathbb{M}}$ has the form $\Phi_*\xi_\ell:\mathbb{M}\rightarrow \textnormal{U}(\mathbb{C}^{d_{\xi_\ell}}),$ $\ell\in\frac{1}{2}\mathbb{N}_0,$  $\dim(\Phi_*\xi_\ell)=d_{\xi_\ell}=2\ell+1.$ So, the proof now follows from Theorem \ref{maintheoremDC2}.
\end{proof}
Now, we study the previous formula in local coordinates.
\begin{rem}
By using the diffeomorphism 
$\varrho:\mathbb{M}\simeq \textnormal{SU}(2)\rightarrow \mathbb{S}^3,$ defined by
\begin{equation}\varrho(z)=x:=(x_1,x_2,x_3,x_4),\,\,\,\textnormal{for}\,\,\,\,
z=\begin{bmatrix}
    x_1+ix_2       & x_3+ix_4  \\
    -x_3+ix_4       & x_1-ix_2 
\end{bmatrix}, 
\end{equation} we have
\begin{align*}
&\textnormal{ind}(T_\omega)\\
&=\int\limits_{\mathbb{M}}\sum_{\ell \in \frac{1}{2}\mathbb{N}_0}(2\ell+1)\textnormal{Tr}[\sigma_{ \omega^{-1} [\Pi,\omega][\Pi,\omega^{-1}][\Pi,\omega][\Pi,\omega^{-1}][\Pi,\omega]} (z)   \xi_\ell(e_\mathbb{M}))    ]dz, \\
&=\int\limits_{\mathbb{S}^3}\sum_{\ell \in \frac{1}{2}\mathbb{N}_0}(2\ell+1)\textnormal{Tr}[\sigma_{ \omega^{-1} [\Pi,\omega][\Pi,\omega^{-1}][\Pi,\omega][\Pi,\omega^{-1}][\Pi,\omega]} (x)   \xi_\ell(e_\mathbb{M}))      ]d\tau(x), \\
\end{align*}where  $$\sigma_{ \omega^{-1} [\Pi,\omega][\Pi,\omega^{-1}][\Pi,\omega][\Pi,\omega^{-1}][\Pi,\omega]}(\varrho^{-1}(x))=:\sigma_{ \omega^{-1} [\Pi,\omega][\Pi,\omega^{-1}][\Pi,\omega][\Pi,\omega^{-1}][\Pi,\omega]}(x),\,\,z=\varrho^{-1}(x).$$ If we use the parametrization of $\mathbb{S}^3$ defined by $x_1:=\cos(\frac{t}{2}),$ $x_2:=\nu,$ $x_3:=(\sin^2(\frac{t}{2})-\nu^2)^{\frac{1}{2}}\cos(s),$ $x_4:=(\sin^2(\frac{t}{2})-\nu^2)^{\frac{1}{2}}\sin(s),$ where
$$(t,\nu,s)\in D:=\{(t,\nu,s)\in\mathbb{R}^3:|\nu|\leq \sin(\frac{t}{2}),\,0\leq t,s\leq 2\pi\},$$
then $d\tau(x)=\sin(\frac{t}{2})d\nu dt ds,$ and 
\begin{align*}
&\textnormal{ind}(T_\omega)=\int\limits_{0}^{2\pi}\int\limits_{0}^{2\pi}\int\limits_{  -\sin(t/2) }^{ \sin(t/2)   }\sum_{\ell\in \frac{1}{2}\mathbb{N}_0}   (2\ell +1)\textnormal{Tr}[\sigma_{ \omega^{-1} [\Pi,\omega][\Pi,\omega^{-1}][\Pi,\omega][\Pi,\omega^{-1}][\Pi,\omega]}(\nu,t,s)].
\end{align*} Thus, we have obtained an explicit index formula for Toeplitz operators on compact 3-manifolds (with trivial fundamental group).
\end{rem}

\textbf{Acknowledgement.} I would like to thank Alexander Cardona  for suggesting the problem of computing the index of Toeplitz operators.

\end{document}